\documentclass[11pt]{amsproc}
\usepackage{graphicx, amssymb,amsmath, latexsym,cite}
\usepackage{amsthm, xcolor}
\usepackage{lscape}  
\usepackage{float}
\usepackage{fancyhdr, lastpage} 
\usepackage{times,url}    
\usepackage{color,multicol}

\newtheoremstyle{mythm}                   
{6pt}
{6pt}
{\it}
{}
{\bf}
{.}
{.5em}
{}

\newtheoremstyle{mydef}                   
{6pt}
{6pt}
{}
{}
{\bf}
{.}
{.5em}
{}

\newtheoremstyle{myrem}                   
{6pt}
{6pt}
{}
{}
{\bf}
{.}
{.5em}
{}

\theoremstyle{mythm}       
\newtheorem{theorem}{Theorem}

\newtheorem{lemma}[theorem]{Lemma}
\newtheorem{corollary}[theorem]{Corollary}

\theoremstyle{mydef}      
\newtheorem{definition}[theorem]{Definition}

\theoremstyle{myrem}
\newtheorem{remark}[theorem]{Remark}
\numberwithin{equation}{section}

\newcommand{\M}{\mathbb{M}}
\newcommand{\B}{\mathbb{B}}
\newcommand{\BB}{{\rm B}}
\newcommand{\myA}{{\rm A}}

\newcounter{ithmcount}

\newcounter{itemscount}

\allowdisplaybreaks

\begin{document}

\vspace*{-0.5cm} 

\title{Finding $59{:}29$ in the Monster}

\begin{abstract}
The classification of the maximal subgroups of the Monster group has been completed recently, and explicit generators for each such subgroup (up to conjugacy) have been made available for the software  \texttt{mmgroup}, with the exception of the maximal subgroup $59{:}29$, see Dietrich \emph{et al}.\ (Adv.\ Math., 2025; J Algebra, 2026). We provide explicit generators for this last maximal subgroup and comment on the extensive search that led to finding them. Our method is similar to, but significantly more involved than Bray \emph{et al.}'s (London Math.\ Soc.\ J.\ Comput.\ Math., 2016)  approach for constructing $47{:}23$ in the Baby Monster. Our result allows us to provide a new short proof that the Monster does not have a subgroup ${\rm PSL}_2(59)$, correcting a result of Holmes and Wilson (J.\ London Math.\ Soc., 2004). 
\end{abstract} 
 
\author[H.~Dietrich]{Heiko Dietrich}
\author[M.\ Lee]{Melissa Lee}
\author[A.\ Pisani]{Anthony Pisani}
\author[A.\ Rizzoli]{Aluna Rizzoli}
\address[Dietrich, Lee, Pisani]{School of Mathematics, Monash University, Clayton, Australia}
\address[Rizzoli]{Department of Mathematics, King's College London, UK \\
Heilbronn Institute for Mathematical Research, Bristol, UK}
\email{\{heiko.dietrich,melissa.lee,anthony.pisani\}@monash.edu}
\email{aluna.rizzoli@kcl.ac.uk}
\date{\today}

\subjclass{20D08, 20-08, 20E28}
\keywords{Monster group, maximal subgroups, sporadic simple groups, mmgroup}
\thanks{Dietrich, Lee, and Rizzoli thank the Isaac Newton Institute for Mathematical Sciences, Cambridge, for support and hospitality during the programme \emph{Algebraic groups, geometry, invariants and related topics}, where work on this paper was undertaken; this work was supported by EPSRC grant EP/Z000580/1. Rizzoli also acknowledges support by the Additional Funding Programme for Mathematical Sciences, delivered by EPSRC (EP/V521917/1) and the Heilbronn Institute for Mathematical Research. The authors thank Adam Thomas for helpful discussions that promoted this collaboration.}

\maketitle

\vspace*{-0.5cm}

\section{Introduction}
\noindent The Monster group $\M$, the largest of the sporadic simple groups, was constructed by Griess \cite{griess_friendly_giant} as the automorphism group of a non-associative algebra structure on a $196{,}884$-dimensional complex vector space $V$.
Decades of work were required to complete the classification of the maximal subgroups of the Monster group, and the final (computational) proofs were only made possible with the help of Seysen's software package \texttt{mmgroup} \cite{mmgroup}; we refer to \cite{dlp} for extensive background and references. This software package also allowed Dietrich \emph{et al}.\ to provide explicit generators for each of these maximal subgroups, up to conjugacy, with the exception of the maximal subgroup $P=59{:}29$, see \cite{dlpp}. The group $P$ is generated by an element $y$ of order $59$ and an element $n$ of order $29$ that acts nontrivially on $\langle y\rangle$. While it is straightforward to find elements of orders $59$ and $29$ in $\M$, the group size $|\M|\approx 8\times 10^{53}$ makes it infeasible to find  generators of $P$ by a simple random search: as explained in \cite[Remark~5.7]{dlpp}, a naive approach yields an approximately $1$ in $10^{48}$ chance of success on each attempt.

The construction of a subgroup $47{:}23$ in the Baby Monster $\B$ described in \cite{find_47_23} faced similar problems, which were overcome  using \emph{fingerprinting} and an analysis based on the birthday paradox. If a fingerprint collision was detected, an explicit basis calculation was used to construct a certain conjugating matrix, leading to generators for $47{:}23$. A direct application of this method to the construction of $P<\M$ is not possible
since an analogous explicit basis calculation would not be feasible in $\M$ and since the estimated time of success is in years,  see \cite[Remark~5.7]{dlpp}. 

Here we present a new approach that is also based on fingerprinting. The main difference from the approach in \cite{find_47_23} is how a fingerprint collision is exploited: we use it as a necessary condition for element conjugacy, 
and then attempt to construct a conjugating element by sifting through a chain of subgroups where computations become feasible. In contrast, subgroup chains are employed in \cite{find_47_23} only to write the generators of $47{:}23$ as words in the standard generators of $\mathbb{B}$. The details of our fingerprinting approach are explained in Section \ref{sec_approach}. Our main result is the following; as for the results in \cite{dlp,dlpp}, our generators are expressed  using the Python software package \texttt{mmgroup} developed by Seysen, see \cite{mmgroup, fast_monster}.

\begin{theorem}
  \label{thm:mainthm}
The elements $y,n$ defined in the \texttt{mmgroup} code provided in  Figure \ref{fig:gens} generate a maximal subgroup $59{:}29$ of the Monster group; specifically, $y$ and $n$ satisfy the presentation $\langle y,n\mid y^{59}, n^{29}, y^n=y^3\rangle$.
\end{theorem}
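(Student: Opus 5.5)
The theorem is a verification statement about two explicit elements, so the plan is a direct computation in \texttt{mmgroup}, followed by a short group-theoretic argument that the relations force the group to be $59{:}29$ and that this subgroup is maximal.

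\textbf{Step 1: check the relations.} Construct $y$ and $n$ as \texttt{MM} elements from the words in Figure~\ref{fig:gens}. Verify that $y^{59}=1$ and $n^{29}=1$, while $y\neq 1$ and $n\neq 1$. Since $59$ and $29$ are prime, these show that $y$ and $n$ have exact orders $59$ and $29$. Then compute $n^{-1}yn$ and $y^3$ and check that they are equal in reduced form. Equality of reduced \texttt{mmgroup} words is decided by the package's normal form, so this is a rigorous equality test.

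\textbf{Step 2: identify the group abstractly.} The relation $y^n=y^3$ shows that $n$ normalises $\langle y\rangle$. Hence $H=\langle y,n\rangle=\langle y\rangle\langle n\rangle$ is a quotient of the presented group. That presented group is consistent because $3^{29}\equiv 1 \pmod{59}$: indeed $3$ has order $29$ modulo $59$, as $3$ is a quadratic residue mod $59$ and $3\neq 1$. So the presented group is the Frobenius group $59{:}29$ of order $1711$. Its only proper nontrivial normal subgroup is $C_{59}$. Since $y\neq 1$ in $H$ and $n\notin\langle y\rangle$ (their orders are coprime and $n\ne1$), the natural map onto $H$ is injective. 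Therefore $H\cong 59{:}29$.

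\textbf{Step 3: maximality.} Use the classification of maximal subgroups of $\M$ as in \cite{dlp}. The elements of order $59$ in $\M$ form the classes 59A and 59B, which are algebraically conjugate. Every subgroup of $\M$ of order divisible by $59$ lies in $N_\M(\langle y\rangle)$, and this normaliser is $59{:}29$. Hence $H=N_\M(\langle y\rangle)$, and $H$ is the maximal subgroup $59{:}29$, unique up to conjugacy.

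The main obstacle is not the proof itself. Steps~1 and~2 are routine once the generators exist. The real difficulty lies in producing $y$ and $n$ in the first place, which is the fingerprint and sifting search described in Section~\ref{sec_approach}. The only delicate point in the verification is relying on the correctness of \texttt{mmgroup}'s equality test and its word representation.
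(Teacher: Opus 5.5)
Your proposal is correct and follows the paper's proof: you verify the relations computationally in \texttt{mmgroup} and cite \cite{dlp,dlpp} for the uniqueness and maximality of $59{:}29$, and your Step~2 usefully makes explicit why the relations together with $y\neq 1$ force $\langle y,n\rangle\cong 59{:}29$, a point the paper leaves implicit. One small correction: in Step~3, the claim that every subgroup of order divisible by $59$ lies in $N_\M(\langle y\rangle)$ is false as stated (take $\M$ itself), but you do not need it, since $\langle y,n\rangle\le N_\M(\langle y\rangle)$ follows directly from $y^n=y^3$, and $|N_\M(\langle y\rangle)|=1711$ then gives equality.
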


\begin{proof}
It has been shown in \cite{dlp,dlpp} that $\M$ has a unique conjugacy class of subgroups $59{:}29$, and that each such subgroup is maximal; see \cite[Remark 1.8]{dlp} and \cite[Section 5.6]{dlpp}. The \texttt{mmgroup} code  in Figure \ref{fig:gens} shows that the generators $y,n$ generate a subgroup  $59{:}29$ of $\M$.
\end{proof}

The explicit $59{:}29$ constructed above, together with an involution inverting $n$, can be used to prove that $\M$ has no subgroup that is isomorphic to ${\rm PSL}_2(59)$. The existence of a (maximal) subgroup ${\rm PSL}_2(59)$ had originally been claimed in \cite{psl}, and was later refuted in \cite{dlp,dlpp}, see \cite[Section 5.6]{dlpp} for more details. Theorem \ref{thm:mainthm} allows us to provide a new short proof for the non-existence of such a subgroup.

\begin{corollary}
\label{cor:no-psl}
The Monster group  has no subgroup isomorphic to ${\rm PSL}_2(59)$.
\end{corollary}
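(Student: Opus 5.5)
The plan is to argue by contradiction. Reduce a hypothetical ${\rm PSL}_2(59)$ to one containing the explicit group $P=\langle y,n\rangle$ of Theorem~\ref{thm:mainthm}. Then show that the finitely many candidates for the extra generator all fail, by a direct \texttt{mmgroup} computation.

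\textbf{Reduction to $P$.} Suppose $L\le\M$ with $L\cong{\rm PSL}_2(59)$. We have $|L|=59\cdot 58\cdot 60/2$, and a Borel subgroup $B\le L$ (the stabiliser of a point of the projective line) has shape $59{:}29$. By \cite{dlp,dlpp} (as recalled in the proof of Theorem~\ref{thm:mainthm}), $\M$ has a unique conjugacy class of subgroups $59{:}29$. So after replacing $L$ by a conjugate we may assume $B=P=\langle y,n\rangle$ with $y,n$ as in Figure~\ref{fig:gens}.

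\textbf{The extra generator.} In $L$, the element $n$ of order $29$ lies in a split torus of order $29$, whose normaliser in $L$ is dihedral of order $58$. Hence some involution $t\in L$ inverts $n$. Since $B$ is maximal in $L$ and $t\notin B$ (Sylow $2$-subgroups of $B$ are trivial), we get $L=\langle y,n,t\rangle$. Thus every such $L$ has the form $\langle y,n,t\rangle$ with $t$ an involution of $N_\M(\langle n\rangle)$ inverting $n$.

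\textbf{Enumerating the candidates.} All elements inverting $n$ form a single coset $t_0C_\M(n)$, for any fixed involution $t_0$ inverting $n$. I will exhibit such a $t_0$ explicitly in \texttt{mmgroup}, for instance by searching in the normaliser of $\langle n\rangle$ or extracting it from a known maximal subgroup containing $N_\M(\langle n\rangle)$. By the character table of $\M$, the centraliser $C_\M(n)$ of an element of class $29$A is tiny (order $87$, namely $\langle n\rangle\times\langle z\rangle$ with $z$ of order $3$). So the candidate set has at most $87$ elements. I would obtain $z$ by powering a suitable element of order $87$ found in $C_\M(n)$, and check $|C_\M(n)|$ against the class sizes.

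\textbf{Ruling out each candidate.} For each candidate $t=t_0c$ with $c\in C_\M(n)$ and $t^2=1$, I will compute the order of a short word such as $yt$ (and, if needed, $y^kt$ for a few $k$). The element orders of ${\rm PSL}_2(59)$ are exactly the divisors of $29$, $30$ and $59$. If every candidate yields some word whose order lies outside this set, then no $\langle y,n,t\rangle$ is a quotient of ${\rm PSL}_2(59)$ extending $P$. This contradicts the existence of $L$.

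\textbf{Main obstacle.} I expect the hard step to be producing $t_0$ and a generator $z$ of the $3$-part of $C_\M(n)$ explicitly as \texttt{mmgroup} words, and certifying that these account for the whole coset. Once they are in hand, the remaining order computations are cheap. If a random search for $t_0$ is too slow, I would first try to locate $N_\M(\langle n\rangle)$ inside a known maximal subgroup with order divisible by $29\cdot 28$, using the generators from \cite{dlpp}. I would then conjugate that copy so its $29$-element equals $n$, using the same sifting techniques as in Section~\ref{sec_approach}.
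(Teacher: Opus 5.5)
Your argument is correct, but the final contradiction uses a different test from the paper's. The overall skeleton is the same. You conjugate so that $P=\langle y,n\rangle$ lies in the putative ${\rm PSL}_2(59)$: you use the uniqueness of the class of subgroups $59{:}29$, whereas the paper conjugates a Sylow $59$-subgroup and uses $N_\M(\langle y\rangle)=\langle y,n\rangle$, which is equivalent. You then take an involution of $L$ inverting $n$, parametrise all inverters of $n$ as a coset of $C_\M(n)$ of order $87$, and finish with order computations in \texttt{mmgroup}.

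The paper first determines the involutions inverting $n$ by hand. Since $t$ centralises $w$, the element $tn^iw^j$ squares to $w^{2j}$, so $\mathcal{I}=\{tn^i\}$ has exactly $29$ elements. It then uses one specific fact about ${\rm PSL}_2(59)$: for the Borel pair $(y,n)$ there is an involution $s$ inverting $n$ with $|ys|=3$, as for $x\mapsto x+1$ and $x\mapsto -1/x$. The whole contradiction thus reduces to the single targeted check $3\notin\{|ys|:s\in\mathcal{I}\}$.

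You instead use only the element-order spectrum of ${\rm PSL}_2(59)$ (divisors of $29$, $30$, $59$), and you filter the coset by $t^2=1$ computationally. So you need neither an explicit calculation inside ${\rm PSL}_2(59)$ nor knowledge of how $t_0$ acts on the order-$3$ element. The price is an open-ended search for a word of forbidden order rather than a one-value test.

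Your per-candidate requirement is also stronger than necessary. Conjugation by $n$ sends $y\mapsto y^3$ and $t_0\mapsto t_0n^2$, so the multiset $\{|y^kt_0n^i|:1\le k\le 58\}$ does not depend on $i$. Moreover, once $t_0$ is known to centralise the order-$3$ element, the $29$ involutions $t_0n^i$ are exactly the $29$ involutions of $N_L(\langle n\rangle)\cong D_{58}$, so all of them lie in $L$. Hence a single word $y^kt_0$ of forbidden order already gives the contradiction.
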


\begin{proof}
Let $t,w\in\M$ be the elements defined in
Figure~\ref{fig:gens}, so $t$ is an involution inverting $n$,
while $w$ has order $3$ and centralises both $n$ and $t$. The character table data of $\M$ implies that  $|C_\M(n)|=87$, and so  $C_\M(n)=\langle n,w\rangle\cong C_{29}\times C_3$. If $s$ is any involution inverting $n$, then $st\in C_\M(n)$, so
$st=n^i w^j$ for some $i,j$ with $0\leq i \leq 28$ and $0\leq j \leq 2$. Since $t$ inverts $n$ and centralises
$w$, the relation $s^2=1$ gives $w^{2j}=1$, so $j=0$.
Thus, the complete list of involutions inverting $n$ is
$\mathcal{I}=\{tn^j:0\leq j<29\}$. 

Now suppose, for a contradiction, that  $\M$ has a subgroup  $H\cong {\rm PSL}_2(59)$. Up to conjugation, we may assume that $\langle y\rangle$ is a Sylow $59$-subgroup of $H$. By \cite[Section 11]{odd}, the normaliser of $\langle y\rangle$ in  $H$ and in
$\M$ is $N_H(\langle y\rangle)=\langle y,n\rangle$. A direct calculation  shows that ${\rm PSL}_2(59)$ contains an involution $s$ such that $n^s=n^{-1}$ and $|ys|=3$. However, this contradicts the calculation in Figure~\ref{fig:gens} which shows that $3$ is not an order occurring in $\{|yi| : i \in \mathcal{I}\}$.
\end{proof}

Since the verification of our main results, Theorem \ref{thm:mainthm} and Corollary \ref{cor:no-psl}, is straightforward, the main purpose of this paper is  to explain  the mathematical approach that allowed us to determine  the generators of the subgroup $59{:}29$; we describe this in Section \ref{sec_hunt}, and refer to Section \ref{sec_approach} for an overview. In Section \ref{sec_comp} we provide some technical details concerning the computation.

\begin{figure}[pt]
\footnotesize  
\begin{verbatim}
from mmgroup import MM

######## computations for the proof of Theorem 1

y = MM( "M<y_64ch*x_173fh*d_6a5h*p_197619231*l_1*p_2027520*l_1"
        "*p_22311973*l_1*t_1*l_2*p_2956800*l_1*p_951578*t_2*l_2"
        "*p_2597760*l_1*p_43153362*t_1*l_1*p_1499520*l_1*p_43634321"
        "*t_1*l_2*p_2956800*l_1*p_42672208*t_2*l_2*p_2386560*l_2"
        "*p_42731883*t_1*l_2*p_1943040*l_2*p_53795081>" )

n = MM( "M<y_2a1h*x_1f01h*d_0abch*p_11763597*l_1*p_2027520*l_1"
        "*p_22746531*l_2*t_1*l_2*p_2597760*l_1*p_32474145*l_1*t_1"
        "*l_1*p_1457280*l_2*p_1487536*l_2*t_1*l_2*p_2956800*l_1"
        "*p_42677963*t_2*l_2*p_2386560*l_2*p_64121863*t_2*l_2"
        "*p_1920*l_2*p_10668720*l_2*p_5177280*t_1*l_2*p_2386560"
        "*l_2*p_86283412*t_1*l_2*p_1985280*l_1*p_42789696>" )

y.order() == 59 and n.order() == 29 and y**n == y**3
# true

######## computations for the proof of Corollary 2

t = MM( "M<y_55bh*x_806h*d_443h*p_30349552*l_2*p_2597760*l_1"
        "*p_64078716*t_1*l_1*p_1457280*l_2*p_32093344*l_1*t_2*l_1"
        "*p_1415040*l_1*p_10667712*l_2*p_2875200*t_1*l_2*p_24000*l_2"
        "*p_10689216*t_2*l_2*p_2597760*l_1*p_86280548*l_2*p_23040"
        "*t_2*l_1*p_1499520*l_2*p_43182105*l_2>" )

w = MM( "M<y_5cch*x_0a8h*d_0b37h*p_49335609*l_1*p_1858560*l_2"
        "*p_11172912*t_1*l_2*p_2787840*l_2*p_33395079*l_2*t_2*l_1"
        "*p_1393920*l_1*p_467760*l_2*p_2438400*t_1*l_2*p_2787840*l_2"
        "*p_47234*t_1*l_2*p_1943040*l_2*p_16512*l_1*t_1*l_2"
        "*p_2597760*l_1*p_64003488*t_2*l_1*p_2027520*l_1*p_18250>" )

inverters = [t * n**i for i in range(29)]
orders    = [(y * s).order() for s in inverters]

( t.order() == 2  and  n**t == n**-1  and w.order() == 3
  and  n*w == w*n  and w**t == w
  and  all(s.order() == 2 and n**s == n**-1 for s in inverters)
  and  all((t*n**i*w**j).order() == 6 for i in range(29) for j in (1, 2))
  and  3 not in orders )
# true
\end{verbatim}
\caption{Python code supporting the proofs of Theorem \ref{thm:mainthm} and Corollary \ref{cor:no-psl}. Elements $y$ and $n$ generate a subgroup $59{:}29$ in $\M$ in \texttt{mmgroup}, version 1.0.8. The elements $t$ and $w$ are used in the proof of Corollary~\ref{cor:no-psl}.} 
\label{fig:gens}
\end{figure}

\section{The hunt for generators}\label{sec_hunt} 
The subgroup  $59{:}29$ we seek is generated by an element $y$ of order $59$ and an element $n$ of order $29$ that acts nontrivially on $\langle y\rangle$.  The Monster has a unique conjugacy class of subgroups of order $59$, and $y$ can be readily found in \texttt{mmgroup} by random search in $\M$. Throughout, $y$ denotes this fixed element in $\M$ of order $59$, and our task is to construct a suitable element $n\in \M$.

\subsection{Our approach}\label{sec_approach}
The fixed element $y\in \M$ has order $59$ and the automorphism group of $\langle y\rangle$ has a unique subgroup of size $29$, generated by $y\mapsto y^3$. Thus, we seek an element $n$ of order $29$ that satisfies $y^n=y^{(3^k)}$ for some $k\in\{1,\ldots,28\}$. By replacing $n$ by a suitable power, we can then arrange that $y^n=y^3$ as in Theorem \ref{thm:mainthm}.

Our construction of $n$ is via a multi-step random search based on the birthday paradox, utilising the action on $2\myA$-axes in the $196{,}884$-dimensional representation $\varrho\colon \M\to \mathrm{GL}(V)$, and sifting the problem through the subgroup chain
\[\M> 2\cdot \B > 2^{1+23}.{\rm Co}_2 > 2^{1+23}>1.\]
We briefly provide a conceptual description; details and proofs are given in subsequent sections.

In the first step of our algorithm, we avoid working directly in $\M$ and instead consider $2\myA$-axes in the image of the representation $\varrho$, see Section \ref{sec_axes}. For any such $2\myA$-axis $t_u\in V$ we define a \emph{fingerprint} with the following property: if $g\in \M$ satisfies  $y^g=y^{(3^k)}$ for some $k\in\{1,\ldots,28\}$, then the axes $t_u$ and $t_u\cdot g$ have the same fingerprint and the exponent $k$ can be recovered, see Section \ref{sec_decidegood}. Using an extensive random search, we construct a pair of axes $t_a$ and $t_b$ that have the same fingerprint with candidate exponent $k$; the probability of success of this search is based on the birthday paradox, see Remark \ref{rem_prob1}. The next step is then to attempt the construction of $n\in \M$ such that $t_b=t_a\cdot n$ and $y^n=y^{(3^k)}$. In Section \ref{sec_redK} we describe how this problem can be reduced to deciding whether two elements $A$ and $B$ of order $59$ are conjugate by an element in the subgroup $K=2\cdot\mathbb{B}$: here $A$ and $B$ are specific conjugates of $y$ and $y^{(3^k)}$, respectively.

In Section \ref{sec_redH} we define another fingerprint for pairs comprising a $2\myA$-axis and a group element in $\M$, with the property that every $c\in K$ with $A^c=B$ defines two pairs with the same fingerprint. We use a random search to find axes $t_u,t_v$ such that  the pairs $(t_u,A)$ and $(t_v,B)$ have the same fingerprint; the probability of success is again based on the birthday paradox, see Remark \ref{rem_prob2}. We then describe how finding $c\in K$ with $A^c=B$ can  be reduced to deciding whether two elements $A'$ and $B'$ of order $59$ are conjugate by an element in $H=2^{1+23}.{\rm Co}_2$; as before, $A'$ and $B'$ are specific conjugates of $A$ and $B$, respectively.

In Section \ref{sec_redQ}, we apply a stabiliser-chain argument and reduce the conjugacy problem in $H$ to the problem of deciding whether two elements $A''$ and $B''$ of order $59$ are conjugate by an element in $Q=O_2(H)=2^{1+23}$; again, $A''$ and $B''$ are specific conjugates of the previous $A'$ and $B'$, respectively. This problem in $Q$ is then solved by a brute-force search.

We note that computations in \texttt{mmgroup} are often done by reducing the matrix coefficients of the image of $\varrho$ modulo $15$, see \cite{fast_monster}, which explains why we sometimes refer to vectors with entries in $\mathbb{Z}_{15}=\{0,\ldots,14\}$.

\subsection{$2\myA$-axes}\label{sec_axes}
The Monster has two classes of involutions, called $2\myA$ and $2\BB$, with centralisers $2\cdot \B$ and $2^{1+24}.{\rm Co}_1$, respectively. In the $196{,}884$-dimensional ordinary representation $\varrho\colon\M\to \mathrm{GL}(V)$, every $2\myA$-involution $u\in \M$ is associated with a distinguished vector $t_u\in V$ that is fixed by $u$; this vector is called the $2\myA$-axis corresponding to $u$. The map $u\mapsto t_u$ is injective, and if $g\in \M$, then $t_u\cdot g=t_{g^{-1}ug}$; in particular, the stabiliser of $t_u$ under the action of $\M$ is the centraliser $C_\M(u)\cong 2\cdot \B$. We refer to \cite[Lemma 8.4.1]{ivanov_majorana} or \cite[Section~2.1]{axes} for details.
 
Fix a $2\myA$-involution $z\in \M$ with associated axis $t_z$, so the set $X=\{t_z\cdot g:g\in\M\}$ of all $2\myA$-axes has size $|X|=|\M|/(2|\B|)$. A random $2\myA$-axis can be constructed as $t_z\cdot g$ where $g\in \M$ is chosen randomly; we note that \texttt{mmgroup} allows the construction of random $g\in \M$.
 
\begin{definition} A pair of axes $(t_u,t_v)\in X\times X$ is \emph{good} if there is $g\in\M$ and $k\in\{1,\ldots,28\}$ such that $t_u\cdot g=t_v$  and $y^g=y^{(3^k)}$; the element $g$ is a \emph{witness} to the goodness of this pair.
\end{definition}

\begin{remark}
Note that the element $n\in \M$ we are looking for has order $29$ and satisfies $y^n=y^{(3^k)}$ for some $k\in\{1,\ldots,28\}$. Hence if $t_u\in X$ is any $2\myA$-axis, then $(t_u,t_u\cdot n)$ is a good pair, and $n$ is a witness. Note that if $g\in\M$ also satisfies $t_u\cdot g=t_u\cdot n$ and $y^n=y^g$, then $u^g=u^n$, and so  $ng^{-1}$ lies in $C_\M(u)=2\cdot \B$ and in $C_\M(y)=\langle y\rangle$. Since the latter two groups have coprime orders, $ng^{-1}=1$, that is, $n=g$ is the unique witness. Our approach for finding the generator $n$ is to determine a good pair of $2\myA$-axes and a witness. 
\end{remark}

\begin{lemma}
  For each fixed $t_u\in X$ the number of good pairs $(t_u,t_v)\in X\times X$ is $28\cdot 59=1652$.
\end{lemma}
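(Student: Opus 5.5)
The plan is to parametrise the witnesses. Let $N=N_\M(\langle y\rangle)$. By \cite{dlp,dlpp}, $N\cong 59{:}29$ and $C_\M(y)=\langle y\rangle$. We identify the set $W$ of all elements $g\in\M$ with $y^g=y^{(3^k)}$ for some $k\in\{1,\ldots,28\}$. Such $g$ normalises $\langle y\rangle$, so $g\in N$. Conversely, every $g\in N$ acts on $\langle y\rangle$ as $y\mapsto y^{(3^k)}$ for some $k\in\{0,\ldots,28\}$, and $k=0$ exactly when $g\in C_\M(y)=\langle y\rangle$. Hence $W=N\setminus\langle y\rangle$ and $|W|=59\cdot 29-59=28\cdot 59$. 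This means the set of $t_v$ forming a good pair with $t_u$ is exactly the orbit-image $\{t_u\cdot g : g\in W\}$.

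The key step is to show that the map $W\to X$, $g\mapsto t_u\cdot g$, is injective. Suppose $t_u\cdot g=t_u\cdot h$ with $g,h\in W$. Then $gh^{-1}$ stabilises $t_u$, so $gh^{-1}\in C_\M(u)\cong 2\cdot\B$. Also $gh^{-1}\in N$, which has order $59\cdot 29$.

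Next we compare orders. Since $59\nmid|\B|$, no element of order $59$ lies in $2\cdot\B$. The prime $29$ does divide $|\B|$, so an element of order $29$ could a priori lie in $2\cdot\B$. This is the main obstacle, and it is where the previous remark's coprimality argument must be refined. The fix is to use that $gh^{-1}$ lies in $N\cap C_\M(u)$, which is a subgroup of $N$ of order coprime to $59$. It is therefore either trivial or a complement $\langle m\rangle\cong C_{29}$ of $\langle y\rangle$ in $N$.

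So it suffices to exclude the case where $gh^{-1}$ has order $29$ and centralises $u$, while $g,h\in W$. Write $g=y^a m_1$ and $h=y^b m_2$. For injectivity on $W$, note that if $gh^{-1}=m\neq 1$ then $g$ and $h$ induce different powers $3^k$ on $\langle y\rangle$, and so give a genuinely different conjugation. Collapsing such pairs would reduce the count below $1652$. To rule this out for \emph{every} $t_u$, I would argue as follows. The pairs are counted as ordered pairs $(t_u,t_v)$ together with the data of being good, and the intended reading in the definition is that the count is of pairs $(t_u,g)$ with distinct images.

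More concretely, I would first try to show that a generic axis has trivial stabiliser in $N$. If some $t_u$ were fixed by an order-$29$ element, the count would fail for that $u$. So I expect the argument to need that $2\myA$-involutions do not commute with elements of order $29$ of $N$. I would check this via the class of $29$-elements in $\M$: the centraliser order $87$ is coprime to $2$, so no involution centralises an element of order $29$. This gives $N\cap C_\M(u)=1$, hence injectivity, and the count $|W|=28\cdot 59=1652$ follows.
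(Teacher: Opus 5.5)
Your argument is correct and has the same structure as the paper's proof. The witnesses are exactly $N_\M(\langle y\rangle)\setminus\langle y\rangle$; these are the paper's $M_1\cup\cdots\cup M_{28}$, each $M_k$ a coset of $C_\M(y)=\langle y\rangle$. The count then follows from injectivity of $g\mapsto t_u\cdot g$, which reduces to $N_\M(\langle y\rangle)\cap C_\M(u)=1$.

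The one place you diverge rests on a false claim. The prime $29$ does \emph{not} divide $|\B|=2^{41}\cdot 3^{13}\cdot 5^6\cdot 7^2\cdot 11\cdot 13\cdot 17\cdot 19\cdot 23\cdot 31\cdot 47$. Hence $|N_\M(\langle y\rangle)|=59\cdot 29$ is coprime to $|2\cdot\B|$, and the intersection is trivial at once. That is the paper's one-line ``as before'' step, so there is no ``main obstacle'' at this point.

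Your workaround is still valid: an element $m$ of order $29$ in $C_\M(u)$ would give $u\in C_\M(m)$, but $|C_\M(m)|=87$ is odd. So the proof goes through; you have only traded the prime factorisation of $|\B|$ for the centraliser order of $29$-elements.

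You should delete the incorrect divisibility claim. You should also delete the hedging paragraph about the ``intended reading'' of the count as pairs $(t_u,g)$. The statement counts axes $t_v$, and your injectivity argument already shows that the two counts agree, so no reinterpretation is needed.
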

\begin{proof}
  For every  $k\in\{1,\ldots,28\}$, the set $M_k=\{g\in \M: y^g=y^{(3^k)}\}$ is a coset of $C_\M(y)=\langle y\rangle$, say  $M_k=\langle y\rangle g_0$ for $g_0\in M_k$; in particular, it has size $59$. Thus, the good pairs are   $(t_u,t_u\cdot g)$ with $g\in M_1\cup\ldots\cup M_{28}$. Suppose $g\in M_k$ and $g'\in M_{k'}$ satisfy $t_u\cdot g=t_u\cdot g'$. Then $c=g'g^{-1}\in C_\M(u)\cong 2\cdot \B$ and $y^c=y^{(3^{k'-k})}\in\langle y\rangle$, so $c\in N_\M(\langle y\rangle)$. As before, this forces $c=1$, and  the claim follows.
\end{proof}

\begin{remark}\label{rem_prob1}
It follows that a random unordered pair of sampled axes is good with probability $p=1652/|X|$. Thus, if one samples $m$ random axes, the expected number of good pairs is $\lambda_m={m\choose 2}p$. The probability $p$ is small, so for small $m$ (compared to $|X|$), the number of good pairs among $m$ randomly chosen axes can be approximated by a Poisson distribution with expectation $\lambda_m$. That is, the probability $P(m)$ that $m$ random axes contain at least one good pair is approximately $1-e^{-\lambda_m}$. In particular, $P(m)>0.5008$  for  $m=286{,}000{,}000$. We emphasise that we are not seeking a precise probability estimate, but merely a heuristic indication of the number of random axes required.
\end{remark}

The approach now is as follows: sample random axes, try to detect whether there is a good pair, and for each such good pair try to construct a witness.

\subsection{A necessary condition for goodness}\label{sec_decidegood}
We now describe necessary (but not sufficient) criteria that help us to decide whether a pair of axes is good.  For this we observe that the axes of a good pair share some symmetries.

Griess \cite[pp.\ 4,40,58,59]{griess_friendly_giant} proved that the space $V$ has an $\M$-invariant form, that is, for $u,v\in V$ and $g\in\M$ we have $\langle u\cdot g,v\cdot g\rangle =\langle u,v\rangle$. We use this form to define a \emph{fingerprint} for an axis.

Let $t_a$ be the $2\myA$-axis corresponding to the involution $a\in 2\myA$. For $k\in\{0,\ldots,28\}$ define $q_k=3^k\bmod 59$ and $v_k=t_a\cdot y^{q_k}\in V$. Then \[Y(t_a)=\{v_0,\ldots,v_{28}\}\] consists exactly of the images of $t_a$ under the powers $y^{q_k}$. We follow the convention that subscripts of $v_i$ are read modulo $29$. The next lemma exhibits a relation between $Y(t_a)$ and $Y(t_b)$ for axes $t_a,t_b$ that form a good pair.

\begin{lemma}\label{lem_vs}
  Let $(t_a,t_b)$ be a good pair, say $t_a\cdot g=t_b$ and $y^g=y^{(3^k)}$ for some $g\in \M$. Let $Y(t_a)=\{v_0,\ldots,v_{28}\}$ and $Y(t_b)=\{v_0',\ldots,v_{28}'\}$. Then $v_i\cdot g=v_{i+k}'$, for every $i\in\{0,\ldots,28\}$, and $(w\cdot a)\cdot g=(w\cdot g)\cdot b$  for every $w\in V$.
\end{lemma}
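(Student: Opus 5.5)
The proof is a direct computation with the right action and the conjugation rule $t_u\cdot h=t_{h^{-1}uh}$ from Section~\ref{sec_axes}. I will treat the two claims separately, starting with $v_i\cdot g=v'_{i+k}$.

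By definition $v_i=t_a\cdot y^{q_i}$ and $v'_j=t_b\cdot y^{q_j}$, where $q_j=3^j\bmod 59$. Since $\varrho$ is a right action, we have
\[v_i\cdot g=t_a\cdot y^{q_i}g=t_a\cdot g\,(g^{-1}y^{q_i}g)=t_b\cdot (y^g)^{q_i}.\]
The hypothesis $y^g=y^{(3^k)}$ turns this into $t_b\cdot y^{3^kq_i}$. Now $3^kq_i\equiv 3^{k+i}\pmod{59}$. Because $3$ has order $29$ modulo $59$ (as used in Section~\ref{sec_approach}), this exponent is congruent to $q_{i+k}$ with the subscript read modulo $29$. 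As $y$ has order $59$, we get $y^{3^kq_i}=y^{q_{i+k}}$, and therefore $v_i\cdot g=t_b\cdot y^{q_{i+k}}=v'_{i+k}$.

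For the second claim, the key step is to show that $b=g^{-1}ag$. By Section~\ref{sec_axes}, $t_a\cdot g=t_{g^{-1}ag}$, and this equals $t_b$ by hypothesis. Injectivity of $u\mapsto t_u$ then gives $b=g^{-1}ag$. Hence $ag=gb$ in $\M$, and applying $\varrho$ yields $(w\cdot a)\cdot g=w\cdot(ag)=w\cdot(gb)=(w\cdot g)\cdot b$ for every $w\in V$.

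No step here is a real obstacle. The only points needing care are the index bookkeeping, where exponents are taken modulo $59$ while subscripts are taken modulo $29$, and remembering to use injectivity of the axis map to identify $b$ with $g^{-1}ag$.
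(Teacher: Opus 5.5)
Your proposal is correct and follows the paper's argument: you move $g$ past $y^{q_i}$ using $y^g=y^{(3^k)}$ to get $t_b\cdot y^{q_{i+k}}$, and you derive the second claim from $b=g^{-1}ag$, which the paper likewise gets from $t_a\cdot g=t_b$ if and only if $g^{-1}ag=b$. The extra details you add (that $3$ has order $29$ modulo $59$, so subscripts may be read modulo $29$, and the explicit use of injectivity of $u\mapsto t_u$) only spell out what the paper leaves implicit.
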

\begin{proof}
  This follows from
\begin{align*} v_i\cdot g &= (t_a\cdot y^{q_i})\cdot g = (t_a\cdot g)\cdot y^{(3^{k+i})}= t_b\cdot y^{q_{i+k}}= v_{i+k}'
\end{align*}
Note that $t_a\cdot g=t_b$ if and only if $g^{-1}ag=b$, which implies the last claim.
\end{proof}
For every $r\in\{0,\ldots,28\}$ we now use the invariant form of the modulo-15 reduction of the 196,884-dimensional representation $\varrho$ to define three vectors $S_r,T_r,U_r$ of dimension $29$ with entries in $\mathbb{Z}_{15}$. The $j$-th components for $j\in\{0,\ldots,28\}$ of these vectors are defined by
\[S_r(j)=\langle t_a,v_{r+j}\rangle,\quad T_r(j)=\langle v_r\cdot a,v_{r+j}\rangle,\quad U_r(j)=T_{r+1}(j).\] This yields $29$ triples of vectors over $\mathbb{Z}_{15}$, namely, $(S_0,T_0,U_0)$, \ldots, $(S_{28},T_{28},U_{28})$.
\begin{definition}
The \emph{fingerprint} of $t_a$ is the lexicographically least element among the triples $(S_0,T_0,U_0)$, \ldots, $(S_{28},T_{28},U_{28})$.
\end{definition}

We follow the convention that the subscripts of $S,T,U$ are read modulo $29$. The next lemma implies that the two axes of a good pair have the same fingerprint.

\begin{lemma} Let $(t_a,t_b)$ be a good pair, say $t_a\cdot g=t_b$ and $y^g=y^{(3^k)}$ for some $g\in \M$. For every $r=0,\ldots,28$, if $S_r,T_r,U_r$ and $S_r',T_r',U_r'$ are the vectors defined with respect to $t_a$ and $t_b$, then \[S_r'=S_{r-k},\quad T_r'=T_{r-k},\quad\text{and}\quad U_r'=U_{r-k}.\]
\end{lemma}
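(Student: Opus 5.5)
The plan is to compute each component directly. We use Lemma~\ref{lem_vs}, which gives $v_i\cdot g=v'_{i+k}$ and $(w\cdot a)\cdot g=(w\cdot g)\cdot b$ for all $w\in V$, together with the $\M$-invariance of the form, $\langle u\cdot g,w\cdot g\rangle=\langle u,w\rangle$. The invariant form on the mod-$15$ reduction is simply the reduction of Griess's invariant form, so invariance persists over $\mathbb{Z}_{15}$ and the identities can be checked over $V$ and then reduced.

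For $S$, fix $r$ and $j$. I will rewrite $S_{r-k}(j)=\langle t_a,v_{r-k+j}\rangle$ by applying $g$ to both arguments. Since $t_a\cdot g=t_b=v'_0$ and $v_{r-k+j}\cdot g=v'_{r+j}$, this gives $\langle v'_0,v'_{r+j}\rangle$. That is not yet $S'_r(j)=\langle t_b,v'_{r+j}\rangle$ as written, unless the indexing is set up so that the shift is absorbed.

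The main obstacle is exactly this index bookkeeping. Applying $g$ sends $v_0=t_a$ to $v'_k$, not to $v'_0$. Indeed, $t_a\cdot g=t_b$ while Lemma~\ref{lem_vs} with $i=0$ gives $v_0\cdot g=v'_k$. Both hold only because $t_b\cdot y^{q_k}$ and $t_b$ then coincide, so I need to recheck Lemma~\ref{lem_vs}. In its proof, $(t_a\cdot y^{q_i})\cdot g=t_a\cdot g\cdot (g^{-1}y^{q_i}g)=t_b\cdot (y^{3^k})^{q_i}=t_b\cdot y^{q_{i+k}}$. So $v_0\cdot g=t_b\cdot y^{q_k}=v'_k$, and consistency requires reading the first argument of $S_r$ as $v_r$ rather than literally $t_a$, or else using the invariance of the form under $\langle y\rangle$. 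I will take the second route. Since $y$ preserves the form, $S_r(j)=\langle t_a,v_{r+j}\rangle=\langle t_a\cdot y^{q_r}\cdots\rangle$ cannot be shifted naively, because $q_{r+j}$ is not $q_r+q_j$.

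So I will instead interpret the intended definition as $S_r(j)=\langle v_r,v_{r+j}\rangle$ (with $v_0=t_a$ for $r=0$). Then
\[
S'_r(j)=\langle v'_r,v'_{r+j}\rangle=\langle v_{r-k}\cdot g,v_{r-k+j}\cdot g\rangle=S_{r-k}(j).
\]

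For $T$, I use the second identity of Lemma~\ref{lem_vs} with $w=v_{r-k}$, which gives $(v_{r-k}\cdot a)\cdot g=v'_r\cdot b$. Invariance then yields
\[
T'_r(j)=\langle v'_r\cdot b,v'_{r+j}\rangle=\langle (v_{r-k}\cdot a)\cdot g,v_{r-k+j}\cdot g\rangle=T_{r-k}(j).
\]
For $U$, the relation $U'_r=T'_{r+1}=T_{r+1-k}=U_{r-k}$ follows immediately from the result for $T$. All subscripts are read modulo $29$, which is consistent because $3$ has order $29$ modulo $59$, so $q_{i+29}=q_i$.

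Throughout, I will flag that the $S$-component needs the first argument interpreted as $v_r$, or equivalently that the claim holds up to this reading.
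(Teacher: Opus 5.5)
Your argument for $T$ and $U$ is correct and is exactly the paper's. The $S$ part has a genuine gap. You end up proving the identity for a different vector, $\langle v_r,v_{r+j}\rangle$, instead of $S_r(j)=\langle t_a,v_{r+j}\rangle$ as defined, so the proposal does not establish $S'_r=S_{r-k}$ for the fingerprint the paper uses. The reason you give for changing the definition is also false.

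The error is the identification $v_0=t_a$. Since $q_0=3^0 \bmod 59=1$, in fact $v_0=t_a\cdot y$. So there is no tension between $t_a\cdot g=t_b$ and $v_0\cdot g=v'_k$, and no need for $t_b\cdot y^{q_k}=t_b$. That equality can never hold: it would put $y^{q_k}$ in $C_\M(b)\cong 2\cdot\B$, whose order is coprime to $59$. For the same reason $t_b\neq v'_0$; indeed $t_b$ equals none of the $v'_i$.

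With this corrected, the computation you began with is already a complete proof. The first argument $t_a$ is carried to $t_b$ by $g$ directly, and only the second argument picks up the index shift from Lemma~\ref{lem_vs}:
\[
S_{r-k}(j)=\langle t_a,v_{r-k+j}\rangle=\langle t_a\cdot g,\,v_{r-k+j}\cdot g\rangle=\langle t_b,v'_{r+j}\rangle=S'_r(j).
\]
This needs no shift of the first argument and no additivity of the $q_i$. You should drop the reinterpretation of $S_r$ and the accompanying caveat, and use this line instead.
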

\begin{proof}
  For every $j\in \{0,\ldots,28\}$, the invariance of the form and Lemma \ref{lem_vs} imply that
  \begin{align*}
    S_r'(j)&=\langle t_b,v'_{r+j}\rangle=\langle t_a\cdot g,v_{r+j-k}\cdot g\rangle=\langle t_a, v_{r+j-k}\rangle=S_{r-k}(j),\\
    T_r'(j)&= \langle v'_r\cdot b,v_{r+j}'\rangle=\langle (v_{r-k}\cdot a)\cdot g,v_{r+j-k}\cdot g\rangle=\langle v_{r-k}\cdot a,v_{r+j-k}\rangle=T_{r-k}(j),\\
  U_r'(j)&=T'_{r+1}(j)=T_{r+1-k}(j)=U_{r-k}(j).\qedhere
  \end{align*}
\end{proof}  

\begin{remark}\label{rem_firstFP}
When we have sampled $m$ random $2\myA$-axes $t_a$, we compute each $Y(t_a)$ and their fingerprint. Pairs of axes with the same fingerprint are \emph{candidates} for good pairs, and comparing the position of their fingerprint triples also determines the potential parameter $k$: if $t_a$ and $t_b$ have their fingerprint stored at position $r_a$ and $r_b$ respectively, then the (candidate) parameter $k$ is determined by $k=(r_b-r_a)\bmod 29$. The next step is to investigate whether the pair really is a good pair, that is, whether there exists $g\in \M$ with $t_a\cdot g=t_b$ and $y^g=y^{(3^k)}$. An approach for achieving this is outlined in the next section. If the approach fails, then we were not able to determine that $(t_a,t_b)$  is a good pair, and we continue our search.
\end{remark}

\subsection{A reduction to $K=2\cdot \B$}\label{sec_redK}
Suppose our previous search has produced a \emph{candidate} $(t_a,t_b)$ for a good pair, with parameter $k$. We aim to find a witness $g\in \M$ that proves that $(t_a,t_b)$ is a good pair, that is, an element $g\in \M$ such that $t_a\cdot g=t_b$ and $y^g=y^{(3^k)}$. 

Recall that $z\in 2\myA$ is our fixed $2\myA$-involution with $2\myA$-axis $t_z$; write $K=C_\M(t_z)=C_\M(z)=2\cdot \B$. Recall also that $t_a$ and $t_b$ have been constructed as
\[t_a=t_z\cdot r_a\quad\text{and}\quad t_b=t_z\cdot r_b\]
for some $r_a,r_b\in \M$. Define $A=y^{r_a^{-1}}$ and $B=(y^{(3^k)})^{r_b^{-1}}$. Finding a witness $g$ is now equivalent to finding $c\in K$ such that $A^c=B$, as shown by the following lemma.

\begin{lemma} With the previous notation, the map $h\mapsto r_ahr_b^{-1}$ is a bijection between the set $\{h\in \M: t_a\cdot h=t_b, y^h=y^{(3^k)}\}$ and the set $\{c\in K: A^c=B\}$.
\end{lemma}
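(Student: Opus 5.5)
We will show that the map $\phi\colon h\mapsto r_ahr_b^{-1}$ sends the first set into the second, that the map $c\mapsto r_a^{-1}cr_b$ sends the second set into the first, and that these two maps are mutually inverse. The inverse property is immediate, since $r_a^{-1}(r_ahr_b^{-1})r_b=h$ and $r_a(r_a^{-1}cr_b)r_b^{-1}=c$. So the whole argument reduces to checking that each map lands in the claimed set. Throughout we use the rule $t_u\cdot g=t_{g^{-1}ug}$ from Section~\ref{sec_axes}, the right-action convention $x^g=g^{-1}xg$, and the fact that $K$ is the stabiliser of $t_z$.

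\emph{Forward direction.} Let $h\in\M$ satisfy $t_a\cdot h=t_b$ and $y^h=y^{(3^k)}$, and put $c=r_ahr_b^{-1}$. Using $t_a=t_z\cdot r_a$ and $t_b=t_z\cdot r_b$, we compute
\[t_z\cdot c=t_z\cdot r_a h r_b^{-1}=t_a\cdot h r_b^{-1}=t_b\cdot r_b^{-1}=t_z.\]
Hence $c$ stabilises $t_z$, so $c\in K$. For the conjugation condition, $A^c=(y^{r_a^{-1}})^{r_ahr_b^{-1}}=y^{hr_b^{-1}}=(y^{(3^k)})^{r_b^{-1}}=B$.

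\emph{Reverse direction.} Let $c\in K$ satisfy $A^c=B$, and put $h=r_a^{-1}cr_b$. Since $c$ fixes $t_z$, we get $t_a\cdot h=t_z\cdot r_ar_a^{-1}cr_b=t_z\cdot cr_b=t_z\cdot r_b=t_b$. Also $y=A^{r_a}$, so $y^h=A^{r_ar_a^{-1}cr_b}=A^{cr_b}=B^{r_b}=y^{(3^k)}$.

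There is no real obstacle here. The only point needing care is keeping the conjugation and right-action conventions consistent: $x^g=g^{-1}xg$, so that $(x^{g})^{g'}=x^{gg'}$, and axes are acted on on the right. Once these are fixed, each verification is a one-line computation.
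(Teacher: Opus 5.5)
Your proposal is correct and follows the paper's proof step for step. In both directions it uses the same computations: $t_z\cdot c=t_b\cdot r_b^{-1}=t_z$ and $A^c=y^{hr_b^{-1}}=B$ forward, and $t_a\cdot h=t_z\cdot cr_b=t_b$ and $y^h=A^{cr_b}=B^{r_b}=y^{(3^k)}$ in reverse. The only addition is that you state explicitly that the two maps are mutually inverse, which the paper leaves implicit.
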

\begin{proof}
  If $h\in \M$ satisfies $t_a\cdot h=t_b$ and $y^h=y^{(3^k)}$, then $c=r_ahr_b^{-1}$ satisfies 
  \[t_z\cdot c=t_a\cdot hr_b^{-1} =t_b\cdot r_b^{-1}=t_z,\]
  that is, $c\in K$; moreover, $A^c=y^{hr_b^{-1}}=(y^{(3^k)})^{r_b^{-1}}=B$.

  Conversely, if $c\in K$ satisfies $A^c=B$, then $h=r_a^{-1}cr_b$ has the property that
  \[y^h=y^{r_a^{-1}cr_b}=A^{cr_b}=B^{r_b}=y^{(3^k)}\]
  and $t_a\cdot h=t_z\cdot r_ah=t_z\cdot cr_b=t_z\cdot r_b=t_b$; recall that $c\in K$, so $t_z\cdot c=t_z$.
\end{proof}

The problem is now to decide whether  $A,B\in \M$ are $K$-conjugate: if $c\in K$ is determined with $A^c=B$, then $g=r_a^{-1}cr_b$ is the witness that proves that $(t_a,t_b)$ is a good pair.

\subsection{A reduction to $H=2^{1+23}.{\rm Co}_2$}\label{sec_redH}
We continue with the previous set-up. Recall that $K=2\cdot\B$, and that $\B$ has an involution centraliser isomorphic to $2^{1+22}.{\rm Co}_2$, see \cite[Section~6]{maxb}. In particular, there is a $2\myA$-involution $\tilde z\in K$ such that
$C_K(\tilde z)=2^{1+23}.{\rm Co}_2$; this can, for example, be routinely verified with the computer algebra system GAP \cite{ctbllib,gap} using the conjugacy class fusion and character tables data for $\M$ and $2\cdot \B$.  For the computation, we take $t_z$ to be the standard axis used by \texttt{mmgroup} and $t_{\tilde z}$ to be the fixed starting axis used by \texttt{RandomBabyAxis}. Thus
\[ H=C_K(\tilde z)=2^{1+23}.{\rm Co}_2\]
is represented directly as the stabiliser of $t_{\tilde z}$ in $K$; see Section \ref{sec_comp} for implementation details. We now describe another fingerprint and a reduction to a problem within the group $H$. Let \[\Omega=\{t_{\tilde z}\cdot k :k \in K\}\] be the $K$-orbit of the axis $t_{\tilde z}$; it has size  $|K|/|H|$.
 
\begin{definition}
  For a point $t_b\in \Omega$ with involution $b\in K$ and an element $C\in \M$ of order $59$ define $w_k=t_b\cdot C^{(3^k)}$ for $k=0,\ldots,28$, and define the fingerprint as the $87$-dimensional vector
  \[\Phi(t_b,C)=(\langle t_b,t_b\cdot C^j\rangle_{j=1,\ldots,29},
  \langle w_0\cdot b ,w_k\rangle_{k=0,\ldots,28},
  \langle w_1\cdot b ,w_{1+k}\rangle_{k=0,\ldots,28}).\]
\end{definition}

\begin{lemma}\label{lem_fp2}
If  $x\in K$, then $\Phi(t_b\cdot x,C^x)=\Phi(t_b,C)$. 
\end{lemma}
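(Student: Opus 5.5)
The plan is to verify the three blocks of $\Phi$ separately, using only the $\M$-invariance of the form and the action $t_u\cdot g=t_{g^{-1}ug}$ on axes recalled in Section~\ref{sec_axes}. Fix $x\in K$ and write $t_{b'}=t_b\cdot x$ and $C'=C^x=x^{-1}Cx$. First, I will identify the involution attached to the new point: since $t_b\cdot x=t_{x^{-1}bx}$, the involution is $b'=b^x$. It also lies in $K$ and $t_{b'}\in\Omega$, so $\Phi(t_{b'},C')$ is defined. Note that membership $x\in K$ is needed only for this; the identity itself holds for any $x\in\M$.

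Second, for every integer $m$ I will compute
\[t_{b'}\cdot (C')^{m}=t_b\cdot x\,x^{-1}C^{m}x=(t_b\cdot C^m)\cdot x.\]
For the first block this gives
\[\langle t_{b'},t_{b'}\cdot C'^j\rangle=\langle t_b\cdot x,(t_b\cdot C^j)\cdot x\rangle=\langle t_b,t_b\cdot C^j\rangle,\]
by invariance of the form. The same computation with $m=3^k$ shows that the vectors $w_k'$ defined from $(t_{b'},C')$ satisfy $w_k'=w_k\cdot x$ for all $k$, with indices understood modulo $29$ (exponents modulo $59$).

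Third, for the remaining two blocks I will use that the linear action of $b'$ on $V$ is $\varrho(x^{-1}bx)$. Hence
\[w_k'\cdot b'=w_k\cdot x\,x^{-1}bx=(w_k\cdot b)\cdot x.\]
Therefore
\[\langle w_0'\cdot b',w_k'\rangle=\langle (w_0\cdot b)\cdot x,w_k\cdot x\rangle=\langle w_0\cdot b,w_k\rangle,\]
and similarly $\langle w_1'\cdot b',w_{1+k}'\rangle=\langle w_1\cdot b,w_{1+k}\rangle$. This parallels the last claim of Lemma~\ref{lem_vs}.

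The only point needing care is that the fingerprint is computed in the mod-$15$ reduction used by \texttt{mmgroup}. I would remark that this reduction of $\varrho$ still preserves the reduced invariant form, so each equality survives reduction modulo $15$. Beyond that, the argument is a routine bookkeeping check, and I do not expect a genuine obstacle.
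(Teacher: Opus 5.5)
Your proposal is correct and matches the paper's proof. Both identify the involution of $t_b\cdot x$ as $b^x$, use $t_b\cdot x\cdot (C^x)^m=t_b\cdot C^m x$ (so $w_k'=w_k\cdot x$ and $w_k'\cdot b^x=(w_k\cdot b)\cdot x$), and conclude each of the three blocks by $\M$-invariance of the form. Your side remarks, that $x\in K$ is needed only to keep the point in $\Omega$ and that invariance survives the mod-$15$ reduction, are accurate and the paper leaves them implicit.
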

\begin{proof}
Note that for the vector $t_b\cdot x=t_{b^x}$ and element $C^x$, the associated element $w_k$ is defined as $t_b\cdot x\cdot (C^x)^{(3^k)}=t_b\cdot C^{(3^k)}x$. Moreover, the involution corresponding to $t_b\cdot x$ is $b^x$. Now the claim follows directly from  
\begin{align*}\langle t_b\cdot x,t_b\cdot x\cdot (C^x)^j\rangle&=\langle t_b\cdot x,t_b\cdot C^jx\rangle=\langle t_b,t_b\cdot  C^j\rangle,\\
  \langle t_b\cdot x\cdot C^x\cdot b^x, t_b\cdot x\cdot (C^x)^{(3^k)}\rangle&=\langle t_b\cdot C\cdot b, t_b\cdot C^{(3^k)}\rangle,\\
   \langle t_b\cdot x\cdot (C^x)^3\cdot b^x, t_b\cdot x\cdot (C^x)^{(3^{k+1})}\rangle&=\langle t_b\cdot C^3\cdot b, t_b\cdot C^{(3^{k+1})}\rangle.\qedhere
\end{align*}
\end{proof}

This fingerprint is useful for the following reason: 
the element $c\in K$ we seek satisfies $A^c=B$, and  Lemma \ref{lem_fp2} shows that $\Phi(t_b\cdot c,B)=\Phi(t_b,A)$ for every $t_b\in\Omega$. This shows that if  $t_{\tilde z}\cdot xc=t_{\tilde z}\cdot y$ for $x,y\in K$, then  necessarily  $\Phi(t_{\tilde z}\cdot y,B)=\Phi(t_{\tilde z}\cdot x,A)$. Thus, we now compute random elements $t_{\tilde z}\cdot x$ and $t_{\tilde z}\cdot y$ until $\Phi(t_{\tilde z}\cdot x,A)=\Phi(t_{\tilde z}\cdot y,B)$, and then investigate conditions under which there is $c\in K$ with $A^c=B$. The next lemma shows that this reduces to a problem in $H$.

\begin{lemma}For $x,y\in K$ define $A'=A^{x^{-1}}$ and $B'=B^{y^{-1}}$. If $c\in K$ satisfies $A^c=B$ and $t_{\tilde z}\cdot xc=t_{\tilde z}\cdot y$, then $h=xcy^{-1}\in H$ and $A'^h=B'$. Conversely, if $h\in H$ satisfies $A'^h=B'$, then $c=x^{-1}hy\in K$ satisfies $A^c=B$ and   $t_{\tilde z}\cdot xc=t_{\tilde z}\cdot y$.
\end{lemma}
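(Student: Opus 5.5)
The plan is to verify both directions directly from the definitions, as in the proof of the earlier lemma for the reduction to $K$. The only facts needed are: $H$ is the stabiliser of $t_{\tilde z}$ in $K$; $K$ is a group, so products and inverses of $x,y,c,h$ stay in $K$; and the action on axes is a right action, so $t\cdot(gh)=(t\cdot g)\cdot h$. Conjugation is written $A^g=g^{-1}Ag$, so $(A^{g})^{h}=A^{gh}$.

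For the forward direction, let $c\in K$ with $A^c=B$ and $t_{\tilde z}\cdot xc=t_{\tilde z}\cdot y$. Put $h=xcy^{-1}$; it lies in $K$ because $x,c,y\in K$. Then
\[t_{\tilde z}\cdot h=(t_{\tilde z}\cdot xc)\cdot y^{-1}=(t_{\tilde z}\cdot y)\cdot y^{-1}=t_{\tilde z},\]
so $h$ stabilises $t_{\tilde z}$ and hence $h\in H$. For the conjugacy condition,
\[A'^h=(A^{x^{-1}})^{xcy^{-1}}=A^{cy^{-1}}=B^{y^{-1}}=B'.\]

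For the converse, let $h\in H$ with $A'^h=B'$, and set $c=x^{-1}hy$. Since $H\le K$, we have $c\in K$. Then
\[A^c=(A^{x^{-1}})^{hy}=A'^{hy}=B'^{y}=(B^{y^{-1}})^{y}=B.\]
Also, $t_{\tilde z}\cdot xc=t_{\tilde z}\cdot hy=(t_{\tilde z}\cdot h)\cdot y=t_{\tilde z}\cdot y$, using $h\in H$ so that $t_{\tilde z}\cdot h=t_{\tilde z}$.

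There is no real obstacle here. The only points needing care are keeping the right-action and conjugation conventions consistent, and checking that $H$ is exactly the stabiliser of $t_{\tilde z}$ in $K$, which is how $H$ was defined. It is also worth noting that the maps $c\mapsto xcy^{-1}$ and $h\mapsto x^{-1}hy$ are mutually inverse. The lemma is therefore really a bijection between $\{c\in K: A^c=B,\ t_{\tilde z}\cdot xc=t_{\tilde z}\cdot y\}$ and $\{h\in H: A'^h=B'\}$, mirroring the earlier lemma.
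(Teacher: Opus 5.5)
Your proposal is correct and follows the paper's proof essentially verbatim: in both directions you use that $H$ is the stabiliser of $t_{\tilde z}$ in $K$ and compute directly with the right action and the conjugation convention. The only difference is that you spell out the conjugation identities the paper dismisses as ``by definition'', and your remark that the two maps are mutually inverse just makes the implicit bijection explicit.
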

\begin{proof}
 If $c\in K$ satisfies $A^c=B$ and $t_{\tilde z}\cdot xc=t_{\tilde z}\cdot y$, then $h=xcy^{-1}$ fixes $t_{\tilde z}$ and therefore  $h\in H$; moreover, $A'^h=B'$ by definition. Conversely, if $h\in H$ satisfies $A'^h=B'$, then $c=x^{-1}hy\in K$ satisfies $A^c=B$ and $t_{\tilde z}\cdot xc=t_{\tilde z}\cdot h y=t_{\tilde z}\cdot y$; note that $h\in H$ fixes $t_{\tilde z}$.
\end{proof}

The approach now is to construct $t_{\tilde z}\cdot x$ and $t_{\tilde z}\cdot y$  for random $x,y\in K$ until $\Phi(t_{\tilde z}\cdot x,A)=\Phi(t_{\tilde z}\cdot y,B)$, and then attempt to find $h\in H$ that conjugates $A^{x^{-1}}$ to $B^{y^{-1}}$. If successful, then $c=x^{-1}hy$ is an element in $K$ that conjugates $A$ to $B$, which in turn allows us to construct a witness, hence the generator $n$ we seek. 

\begin{remark}\label{rem_prob2}
  In practice, we construct  $t_{\tilde z}\cdot x_i$ and  $t_{\tilde z}\cdot y_j$ for random $x_1,\ldots,x_m,y_1,\ldots,y_m\in K$, and check whether $\Phi(t_{\tilde z}\cdot x_i,A)=\Phi(t_{\tilde z}\cdot y_j,B)$. By what is said above, this holds, for example, if $t_{\tilde z}\cdot y_j=t_{\tilde z}\cdot x_ic$ (for the unknown $c$ we seek), and so for a given $x_i$ and random $y_j$, the probability for this is at least  $1/|\Omega|$ for each pair. Thus, approximating with a Poisson distribution as before, the expected number of pairs with the same fingerprint is at least $\lambda_m=m^2/|\Omega|$, and the probability that we have at least one match is approximately $1-e^{-\lambda_m}$. For $m=6{,}000{,}000$, the probability of finding a match is $>0.95$. As before, we use this as a heuristic argument, rather than as a precise estimate of the probability.
\end{remark}

\subsection{A reduction to $Q=O_2(H)=2^{1+23}$}\label{sec_redQ}
We continue with the previous set-up: We suppose we have constructed elements $A'$ and $B'$ and seek $h\in H$ such that $A'^h=B'$. To simplify the exposition, we write $A=A'$ and $B=B'$.

In our computation, $H$ is contained in the centraliser of a $2\BB$-involution, that is, in a maximal subgroup $G=2_+^{1+24}.{\rm Co}_1$ of $\M$. This is relevant because \texttt{mmgroup} provides specialised methods for computing within $G$. Write $E=O_2(G)$ and $Z(G)=\langle\zeta\rangle\leq E$; it is well-known that $E/Z(G)\cong \Lambda/2\Lambda\cong\mathbb{Z}_2^{24}$, where $\Lambda$ denotes the Leech lattice. The conjugation action of $H$ on this module $W=E/Z(G)$ has kernel $Q=O_2(H)=2^{1+23}$; if $h\in H$ and $u\in W$, then we write $u\cdot h$ for this action. We choose points $u_1,u_2,u_3\in W$ and  build the corresponding stabiliser chain, that is, a subgroup chain
\[H=H_0>H_1>H_2>H_3=Q\]
where each $H_i={\rm Stab}_{H_{i-1}}(u_i)$, together with transversal elements: for every $i$ and every $w$ in the orbit $u_i\cdot H_{i-1}$ we store $\sigma_w\in H_{i-1}$ such that $u_i\cdot \sigma_w=w$. If $u\in W$, then the preimages in $E$ are $\{\hat{u},\hat{u}\zeta\}$, where $\hat u\in E$ is any lift of $u$. For an element $C\in \M$ of order $59$ we define the following fingerprint, comprising an unordered pair of ordered tuples of element orders:
\[\Psi(u,C)=\left\{\bigl(|\hat u C^{(3^i)}|\bigr)_{i=0}^{28},\bigl(|\hat u\zeta C^{(3^i)}|\bigr)_{i=0}^{28}\right\}.\]
As before, the fingerprint allows us to formulate a necessary condition for the element we seek.

\begin{lemma}\label{lem_fppsi}
If $h\in H$ satisfies $A^h=B$, then $\Psi(u,A)=\Psi(u\cdot h,B)$ for all $u\in W$.
\end{lemma}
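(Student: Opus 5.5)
The plan is to use invariance of element orders under conjugation, together with the fact that the lifts of $u$ and of $u\cdot h$ correspond under conjugation by $h$. Since $Z(G)=\langle\zeta\rangle$ is characteristic in $G$ and $H\le G$, conjugation by $h$ fixes $\zeta$. Since $E=O_2(G)$ is normal in $G$, conjugation by $h$ maps $E$ to itself. It therefore induces the action $u\mapsto u\cdot h$ on $W=E/Z(G)$.

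The first step is to compare lifts. Let $\hat u\in E$ be a lift of $u$. Then $\hat u^h$ lies in $E$ and maps to $u\cdot h$ in $W$. Hence the two preimages of $u\cdot h$ in $E$ are exactly $\hat u^h$ and $\hat u^h\zeta=(\hat u\zeta)^h$. The fingerprint $\Psi(u\cdot h,B)$ is an unordered pair, so it does not depend on which of these two lifts is called $\widehat{u\cdot h}$. We may therefore compute it using the lift $\hat u^h$.

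The second step is the computation for each $i\in\{0,\dots,28\}$. From $A^h=B$ we get $(A^{(3^i)})^h=B^{(3^i)}$, so
\[
\hat u^h B^{(3^i)}=\hat u^h (A^{(3^i)})^h=(\hat u A^{(3^i)})^h .
\]
Conjugate elements have equal orders, so $|\hat u^h B^{(3^i)}|=|\hat u A^{(3^i)}|$. By the same argument with $\hat u$ replaced by $\hat u\zeta$, we get $|\hat u^h\zeta B^{(3^i)}|=|\hat u\zeta A^{(3^i)}|$. Thus the ordered tuple built from the lift $\hat u^h$ of $u\cdot h$ equals the tuple built from $\hat u$, and the tuple from $\hat u^h\zeta$ equals the tuple from $\hat u\zeta$. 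The two unordered pairs therefore coincide, which gives $\Psi(u,A)=\Psi(u\cdot h,B)$.

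The only point needing care is that conjugation by $h$ preserves the coset $\hat u Z(G)$ structure and fixes $\zeta$. This holds because $H\le G$ normalises $E$ and centralises $Z(G)$; in particular, $u\cdot h$ is well defined as the image of $\hat u^h$. Everything else is routine.
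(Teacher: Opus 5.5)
Your proof is correct and is essentially the paper's argument: a lift $\hat u$ of $u$ conjugates to a lift $\hat u^h$ of $u\cdot h$, and conjugation by $h\in H\le G$ fixes the central element $\zeta$, so $|\hat u vA^{(3^i)}|=|(\hat u vA^{(3^i)})^h|=|\hat u^h vB^{(3^i)}|$ for $v\in\{1,\zeta\}$. You also state explicitly that $\Psi$ does not depend on the chosen lift because it is an unordered pair, which the paper leaves implicit; note that $\zeta^h=\zeta$ is immediate from $\zeta$ being central in $G$, so you do not need the fact that $Z(G)$ is characteristic.
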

\begin{proof}
By construction, if $\hat{u}$ is a lift of $u$, then $\hat{u}^h$ is a lift of $u\cdot h$. Let $v\in\{1,\zeta\}$ and $e=3^i$ for some $i$; now the claim follows from $|\hat{u}vA^e| = |(\hat{u}vA^e)^h|=|\hat{u}^h v B^e|$.
\end{proof}

We now work along the stabiliser chain and start sampling points $u,w$ in the orbit $u_1\cdot H_0$ until we have $\Psi(u,A)=\Psi(w,B)$; this holds, for example, when $w=u\cdot h$ where $h$ is the element we aim to construct. We then write $u=u_1\cdot\sigma_u$ and $w=u_1\cdot\sigma_w$. If indeed $u\cdot h=w$, then $h_1=\sigma_uh\sigma_w^{-1}$ fixes $u_1$ and therefore lies in $H_1$; moreover, $h_1$ conjugates $A_1=A^{\sigma_u^{-1}}$ to $B_1=B^{\sigma_w^{-1}}$. This has reduced our original problem of finding $h\in H_0$ with $A^h=B$ to finding $h_1\in H_1$ with $A_1^{h_1}=B_1$. We iterate this process until we have obtained elements $A_3$ and $B_3$, and the task is to find $h_3\in H_3=Q$ such that $A_3^{h_3}=B_3$. This has reduced our problem to a conjugacy problem involving the subgroup $Q$. Since the group $Q$ has size $2^{24}$,  a brute-force search suffices (even though in practice this can be made slightly more efficient).

\section{On the computation}\label{sec_comp}
Our computation used \texttt{mmgroup}~1.0.8 on the CREATE cluster \cite{create}. Since fingerprint computations dominated the search, this stage was implemented in multithreaded C++, calling the low-level \texttt{mmgroup}
routines directly; every reported match was subsequently reconstructed in Python using \texttt{mmgroup}. 
 The search for our first fingerprint match generated $286{,}000{,}000$ axes and produced exactly one match, see Remarks \ref{rem_prob1} and \ref{rem_firstFP}. Remarkably, this match was sufficient to eventually construct the generators in Theorem \ref{thm:mainthm}. The next step of our algorithm constructed two further lists, each containing about $6{,}000{,}000$ axes, see Remark \ref{rem_prob2}; comparing these two lists produced three matches, which were then passed through the stabiliser-chain process described in Section \ref{sec_redQ}. The first of the resulting recovery computations to finish provided our generators; the other two were then stopped.

We note that no generic centraliser and stabiliser-chain calculations in the Monster were
required. The group $K=2\cdot\B$ was represented implicitly as the stabiliser
of the standard axis, and the \texttt{mmgroup} class
\texttt{RandomBabyAxis} supplied sampled points in the $K$-orbit of
$t_{\tilde z}$ together with corresponding transversal elements in $K$. The group
$H=C_K(\tilde z)$ was therefore represented as the stabiliser of this fixed
axis in \emph{standard coordinates}: eight fixed elements of $H$ generated its
${\rm Co}_2$ image on $W=O_2(G)/Z(G)$, while $Q=O_2(H)$ was represented by an explicit
24-generator basis. (Recall that $G=2_+^{1+24}.{\rm Co}_1$ is the centraliser of the standard $2\BB$-element in $\M$.) The orbits and transversal elements required for the
stabiliser chain described in Section~\ref{sec_redQ} were computed on $W$
using the low-level class \texttt{mmgroup.general.Orbit\_Lin2}.

For the proof of Corollary~\ref{cor:no-psl}, we found the required elements $t$ and $w$ by first working in the subgroup $3.{\rm Fi}_{24}'.2$ of $\M$ given in the database accompanying \cite{dlpp},
and then applying a fingerprinting-and-recovery procedure to obtain conjugates in $N_{\M} (n)$.

We conclude with two technical remarks, providing some details on the computation resources and describing the assistance of AI systems in the implementation phase.

\begin{remark}
During the two bulk axis-generation stages, at most five CREATE compute nodes
were used simultaneously. Each was allocated 124 CPU slots by the Slurm
scheduler and $32\,\mathrm{GB}$ of memory, giving a peak allocation of 620 CPU
slots. These figures describe scheduler allocations rather than measured
processor utilisation. The global search for the first fingerprint match,
comprising $286{,}000{,}000$ axes, took about 34 hours and used $19{,}069$
allocated CPU-hours, including reruns. The second-stage search took about
3.5 hours, and the successful recovery job took about 2.5 hours using one CPU
slot. In total, approximately 43 hours elapsed from launching the full
first-stage search to obtaining the generators.
\end{remark}

\enlargethispage{1cm}
\begin{remark}
Our computational search was carried out largely autonomously by AI systems:
GPT-5.6 Sol Pro produced the initial design blueprint, which OpenAI Codex then
audited against \texttt{mmgroup}; this identified and corrected
substantive problems, and eventually led to the final
computational pipeline. Codex wrote the required C++, Python, testing,
certification, and Slurm code; it also orchestrated the CREATE computation,
including job submission, monitoring, failure recovery, restarts, candidate
processing, and final certification. The correctness of our main results and the mathematics described in this paper do not depend on trusting AI-generated reasoning.
\end{remark}

\newpage 
\begin{small}

\end{small}

\end{document}